\newcommand{\size}[1]{\left|#1\right|}
\newcommand{\set}[1]{\left\{#1\right\}}
\newcommand {\Z}{\mathbb Z}
\newcommand{\Gal}{\operatorname{Gal}}
\newcommand{\floor}[1]{\lfloor #1\rfloor}
\newcommand{\FF}{\mathbb{F}}
\newcommand{\GFq}[1]{\mathbb{F}_{#1}}
\newcommand{\GFpn}[2]{\mathbb{F}_{{#1}^{#2}}}
\newcommand{\ZN}[1]{\Z/{#1}\Z}
\newcommand{\Proba}{\varpi}
\newcommand{\refeq}[1]{(\ref{#1})}
\newtheorem{theorem}{Theorem}
\newtheorem{proposition}[theorem]{Proposition}
\newtheorem{lemma}[theorem]{Lemma}
\newenvironment {remarks}
   {\noindent{\it Remarks.}}
   {}
\newcommand{\intervalle}[2]{[#1,#2]}
\newcommand{\ev}{\operatorname{ev}}
\begin{document}

\title[Discrete logarithm computations using Reed-Solomon codes]
{Discrete logarithm computations over finite fields using Reed-Solomon codes}

\author{D.~Augot}
\author{F.~Morain}
\address{INRIA \& LIX \\
\'Ecole polytechnique \\
91128 Palaiseau \\
France}
\email[D.~Augot]{daniel.augot@inria.fr}
\email[F.~Morain]{morain@lix.polytechnique.fr}

\date{}

\begin{abstract}
Cheng and Wan have related the decoding of Reed-Solomon codes to the
computation of discrete logarithms over finite fields, with the aim of
proving the hardness of their decoding. In this work, we
experiment with solving the
discrete logarithm over $\GFpn{q}{h}$ using Reed-Solomon decoding. For
fixed $h$ and $q$ going to infinity, we introduce an algorithm (RSDL)
needing $\tilde{O}(h!\cdot q^2)$ operations over $\GFq{q}$, operating on a
$q\times q$ matrix with $(h+2) q$ non-zero coefficients. We give faster
variants including an incremental version and another one that uses
auxiliary finite fields that need not be subfields of $\GFpn{q}{h}$;
this variant is very practical for
moderate values of $q$ and $h$. We include some numerical results of our
first implementations.
\end{abstract}

\maketitle

%%%%% S
\section{Introduction}

The fastest known algorithms for computing discrete logarithms in a
finite field $\GFpn{p}{n}$ all rely on variants of the number field
sieve or the function field sieve. The former is used when $n=1$ (see
\cite{Gordon93b,Schirokauer93,ScWeDe96,Weber96,JoLe03,Schirokauer05,CoSe06})
or $p$ is medium (\cite{JoLeSmVe06} improving on \cite{JoLe06}). The
latter is used for fixed $p$ and $n$ going to infinity (see
\cite{Adleman94,AdHu99,JoLe02,GrHoPaSmVe04} and \cite{Coppersmith84}
for $p=2$ generalized in \cite{Semaev98b}). Some related
computations are concerned with computing discrete logarithms over
tori \cite{GrVe05}. All complexities are $L_{p^n}[c, 1/3]$ where as
usual
$$L_x[c, \alpha] = \exp((c+o(1)) (\log x)^{\alpha} (\log\log
x)^{1-\alpha})$$
as $x$ goes to infinity, $c>0$ and $0\leq \alpha < 1$ being constants.

Traditional index calculus methods over $\GFpn{q}{h} =
\GFq{q}[X]/(Q(X))$ (where $Q$ has degree $h$) look for relations of the
type
\begin{equation}\label{relation}
X^u  \bmod Q(X) =: P(X) = \prod_{i=1}^n p_i(X)^{\alpha_{u, i}},
\end{equation}
where $u$ varies and the $p_i$ belong to a factor base
$\mathcal{B}$ containing
irreducible polynomials in $\GFq{q}$. The polynomial $P(X)$
generically has degree $h-1$, and we must find a way to factor it over
$\mathcal{B}$ using elementary division or sieving techniques.
This {\em collection phase} yields a linear system over $\ZN{(q^h-1)}$
that has to be solved in order to find $\log p_i$. Very often, the
system is sparse and suitable methods are known (structured
elimination, block Lanczos \cite{Montgomery95}, block Wiedemann
\cite{Coppersmith94}).

The second phase ({\em search phase}) requires finding a factorization
of $X^u f(X)$, where we want the discrete logarithm of $f(X)$.

\medskip Our aim in this work is to investigate the use of decoding
Reed-Solomon codes instead of factorization of polynomials in the core
of index calculus methods, following the approach of
\cite{ChWa07,ChengWan04}. Superficially, the code-based algorithm
(called RSDL) replaces relations of the type (\ref{relation}) by
$$X^u \equiv f_A(X):= \prod_{a\in A} (X-a) \bmod Q(X),$$
where $A$ is a subset of a fixed set $S \subset \GFpn{q}{h}$. Such a
relation exists if and only if $X^u\bmod Q(X)$ can be decoded. In case
of successful decoding, the set $A$ (or its complement) is recovered
via factorization. If $S$ has cardinality
$n$, $f_A(X)$ will be of degree $n-h$, which highlights one of the
differences with a classical scheme. 

It will turn out that taking $S = \GFq{q}$, so that $n=q$,
is often the sensible choice to do and therefore our method is
interesting in the case $q$ relatively small. Very much like in
Gaudry's setting \cite{Gaudry09}, we will end up with a method of
complexity $\tilde{O}(h!\cdot q^2)$ operations over $\GFq{q}$, for
fixed $h$ and $q$ tending to infinity. The dependency on $h$ can be
dramatically lowered using a variant based on {\em helper fields},
auxiliary finite fields that need not be subfields of $\GFpn{q}{h}$,
making the variant very practical for moderate $q$ and $h$.

\bigskip
The article starts with a review of the theory and practice of
Reed-Solomon codes (Sections \ref{sct-rs} and \ref{sct-ud}). Section
\ref{sct-dl} comes back to the computation of discrete logarithms. The
analysis will be carried out in Section \ref{sct-analysis}. In Section
\ref{sct-incr}, we give an incremental version of our algorithm,
which is faster in practice. Section \ref{sct-galois} will be
concerned with the use of helper fields and their Galois properties.

%%%%% S
\section{Reed-Solomon codes}
\label{sct-rs}

%%%%%%%%%% SS
\subsection{Definition and properties}

Let $\FF$ be a field,  and $S = \set{x_1, x_2, \ldots,
x_n} \subset \FF^n$ be fixed, with $x_i\neq x_j$ for $i\neq j$. Define
 the evaluation map:
\[
\ev_S:\begin{array}[t]{ccl}
\FF[X] & \rightarrow & \FF^n\\
 r(X)&\mapsto &(r(x_1),\dots,r(x_n)).
\end{array}
\]
For a given $1\leq k\leq n$, the \emph{Reed-Solomon code} $C_k$ over $F$,
with support $S$ and dimension $k$ is
\[
\set{\ev_S(r(X))|\; r(X)\in\FF[X], \deg r(X) < k}\subset \FF^n,
\]
and the set $S$ is called the \emph{support} of the code, see~\cite{Roth:ITCT2006}
It is a \emph{linear code}
whose elements are called \emph{codewords}. The (Hamming) distance
between $y,z\in\FF$ is
\[
d(y,z)=\size{\set{i\in\intervalle1n|\; y_i\neq z_i}},
\]
and $r(X)$ is at distance $\tau$ from $y=(y_1,\dots,y_n)$ if $d(\ev_S
(r(X)),y)\leq\tau$.  The minimum distance of a general code is the
smallest distance between two different codewords, and the minimum
distance of $C_k$ is known to be equal to $d=n-k+1$.

% An equivalent formulation is to say that
% $f(X)$ is $\mu$-close to $y$, with $\mu+\tau=n$. This means that
% \[
% \left|
% \left\{
% i\in \intervalle 1n| f(x_i)=y_i
% \right\}
% \right|\geq \mu.
% \]

%%%%%%%%%% SS
\subsection{The decoding problem}
Given $C_k$ as above, the decoding problem is: given $y\in\FF^n$, and
$\tau\leq n$, find the codewords $c\in C_k$ within Hamming distance
$\tau$ of $y$. This problem and its complexity depend $\tau$. It is a
NP-complete problem~\cite{Guruswami-Vardy:IEEE_IT2005} for general
finite fields, $n$, $k$ and $\tau$.

For Reed-Solomon codes, this amounts to finding, for any $y\in \FF^n$,
the set:\[
F_{\tau}(y)=\{
r(X)\in\FF[X]|\; \deg f(X)<k,\; d(\ev_S (r(X)),y)\leq \tau
\}.
\]
A given algorithm is said to decode up to $\tau$ if it finds
$F_\tau(y)$ for any $y$.  If $\tau>n-k$ tall solutions can be found by
Lagrange interpolation, and there are $\binom{n}{\tau}q^{k-n-\tau}$ of
them.  On the other hand, when $\tau$ is small enough, we have:
\begin{proposition} (Unique decoding) Let $k$ be fixed and let $\tau
  \leq\floor{\frac{n-k}2}$. Then, for any $y\in\FF^n$, one has
  $\left|F_{\tau}(y)\right|\leq 1$.
\end{proposition}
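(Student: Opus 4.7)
The plan is to derive uniqueness from the minimum distance bound $d = n-k+1$ stated just above, using only the triangle inequality for the Hamming metric.

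First, I would suppose for contradiction that $r_1(X), r_2(X) \in F_\tau(y)$ with $r_1 \neq r_2$. By definition both $\ev_S(r_1)$ and $\ev_S(r_2)$ lie within Hamming distance $\tau$ of $y$. Since Hamming distance is a genuine metric on $\FF^n$, the triangle inequality gives
\[
d(\ev_S(r_1),\ev_S(r_2)) \leq d(\ev_S(r_1),y) + d(y,\ev_S(r_2)) \leq 2\tau \leq n-k.
\]

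Next I would invoke the fact that $C_k$ has minimum distance $n-k+1$. Because $r_1$ and $r_2$ both have degree less than $k$, their evaluations are codewords of $C_k$, and $\ev_S$ is $\FF$-linear, so $\ev_S(r_1) - \ev_S(r_2) = \ev_S(r_1 - r_2)$ is itself a codeword. Being nonzero (as $r_1 \neq r_2$ and $\ev_S$ restricted to polynomials of degree less than $k \leq n$ is injective), this codeword has weight at least $n-k+1$, contradicting the bound $d(\ev_S(r_1), \ev_S(r_2)) \leq n-k$.

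There is no real obstacle here; the argument is entirely formal once the minimum distance $d = n-k+1$ of $C_k$ is granted. The only small point worth flagging is the injectivity of $\ev_S$ on polynomials of degree less than $k$, which follows because any nonzero polynomial of degree less than $k \leq n$ has at most $k-1 < n$ roots in $\FF$ and therefore cannot vanish on all of the $n$ distinct points of $S$; this same observation, incidentally, underlies the minimum distance statement itself.
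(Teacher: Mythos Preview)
Your argument is correct and is exactly the standard proof of this classical fact: two distinct codewords within distance $\tau$ of the same $y$ would, by the triangle inequality, lie within $2\tau \leq n-k$ of each other, contradicting the minimum distance $d = n-k+1$. The paper itself states the proposition without proof, treating it as a well-known consequence of the minimum distance of $C_k$, so there is nothing further to compare.
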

The decoding problem is a list decoding problem when
$\floor{\frac{n-k}2}<\tau<n-k$, and an a priori combinatorial problem
is to determine how large is the size $\ell$ of $F_{\tau}(y)$, in the
worst case over $y$. Of interest is to find $\tau=\tau(n,k)$ such that
$\ell=\ell(n,k)$ is small and $\tau=\floor{n-\sqrt{(k-1)n}}$ was
achieved, in the breakthrough papers~\cite{Sudan:JOC1997,GuSu1999}. In
the present paper, we consider only unique decoding, since unique
decoding algorithms are simpler and faster.

%%%%% S
\section{A fast algorithm for uniquely decoding Reed-Solomon codes}
\label{sct-ud}

Among the many algorithms for decoding Reed-Solomon codes, we have
focused our attention on a variant of the Euclidean algorithm of
\cite{SuKaHiNa75}. This version is due to Gao \cite{Gao:2002}.

Let $y=(y_i) \in \FF^n$ to be decoded, $c=(c_i)\in C_k$ be at distance
$\tau$ from $y$, if it exists, $e=y-c=(e_i)$ the error vector, and 
$E = \{i | e_i \neq 0\}$. The {\em locator polynomial} of $e$ is $v(X)
= \prod_{i \in E} (X-x_i)$, and the decoding problem often reduces to finding
this polynomial. Given a decoding radius $\tau$, the correct behaviour
of a decoding radius is to report failure, when the number of errors
is larger than $\tau$. The following algorithm is correct for
Reed-Solomon codes and $\tau=\floor{\frac{n-k}2}$ (unique decoding).
%Knowing $\sigma$ and its roots is enough to recover the correct word
%and the error word.

%%%%%%%%%% SS
\subsection{Gao's algorithm}
\label{realgao}

For convenience, we reproduce Algorithm 1a in \cite{Gao:2002}.
We let $(x_i)$ be the support of the code and $(y_i)$ a received word.
Remember that $k = n-d+1$. In our case, we will have $k \simeq n$ and
therefore $d$ small. We denote by PartialEEA($s_0$, $s_1$, $D$) the
algorithm that performs the euclidean algorithm on $(s_0, s_1)$ and
stops when a remainder has degree $<D$. In other words, when this algorithm
terminates, we have computed polynomials $u$ and $v$ such that
$$s_0(X) u(X) + s_1(X) v(X) = g(X)$$
where $g$ is the first remainder that has degree $<D$. We note $P(X)
\div X^k$ for the quotient of $P(X)$ by $X^k$.

\medskip
\noindent {\bf Algorithm 1a}

\noindent
INPUT: $(x_i) \in \FF^n$, $(y_i)\in\FF^n$

\noindent
OUTPUT: the error locator polynomial in case of
successful decoding; \verb+failure+ otherwise.

\medskip
\noindent
Step 0. (Compute $G$) Compute $G(X) = \prod_{i=1}^n (X-x_i)$.

\noindent
Step 1. (Interpolation) Compute $I(X)$ such that $I(x_i) = y_i$ for
all $i$.

\noindent
Step 2. (Partial gcd) Perform PartialEEA with inputs $s_0 = G \div X^{k}$
(of degree $d-1$), $s_1 = I \div X^{k}$ (of degree $\leq d-2$),
$D = (d-1)/2$, at which time
$$u(X) s_0(X) + v(X) s_1(X) = g(X)$$
with $\mathrm{deg}(g) < (d-1)/2$.

\noindent
Step 3. (Division)
divide $G(X)$ by $v(X)$ to get $G(X) = h_1(X) v(X) +
r(X)$. If $r \equiv 0$, return $v(X)$, otherwise return \verb+failure+.

\medskip
The original algorithm adds another step for recovering the codeword
in case of success, but we do not need it for our purposes. In our
case, we will need to factor $v(X)$ to get the error locations.

\medskip
This algorithm has been analyzed in \cite{ChYa08}, where fast
multiplication and gcd algorithms are considered (for the
characteristic $2$ case). We briefly summarize the results. 

Let $M(n)$ be the cost to perform a multiplication of two
polynomials of degree $n$ with coefficients in $\FF$, counted in terms
of operations in $\FF$. Following the algorithms of \cite{GaGe99}, we
find that Step 0 costs $O(M(n))$ and Step 1 costs $O(M(n)\log n)$. 
Step 2 requires computing $G(X) \div X^k$ and $I(X)\div X^k$, which
is just coefficient extraction. PartialEEA requires $O(M(d)\log d)$
operations (note that precise constants are
given in \cite{ChYa08}). Step 3 requires a division of a polynomial of
degree $n$ by one of degree $d \leq n$, which costs $O(M(n))$. The
cost of computing the roots of $v(X)$ will depend on the base field.

%%%%%%%%%% SS
\subsection{Improvements}

%%%%%%%%%%%%%%% SSS
\subsubsection{Computing $G$}

We may compute the highest terms of
${G \div X^k}$ in time $O(M(n))$ (with a small constant, since the last
step in the product tree will be computing the highest terms).

%%%%%%%%%%%%%%% S
\subsubsection{Interpolation}

The input to the PartialEEA is
$$s_1(X) = I(X) \div X^k = \sum_{i=1}^n \frac{y_i}{G'(x_i)}
(I_i(X) \div X^k) = \sum_{i=1}^n y_i H_i(X).$$
Note that the $H_i(X)$ are polynomials of degree $\leq
d-2$. We can compute $I_i(X) \div X^k$ by appropriately modifying the
last step of the algorithm using product trees, so as to compute only
the higher order terms of $I_i(X)$. This will not modify the
complexity, but will decrease the constant.

%%%%%%%%%%%%%%% SSS
\subsubsection{Reusing data}

If the $x_i$ are fixed (this will be our case), then $G(X)$ can be
precomputed (and $s_0$ deduced from it), as well as $G'(x_i)$. The
polynomials $H_i(X)$ can also be precomputed. Instantiating the
formula for $s_1(X)$ will require $O(n d)$ operations, which is
interesting when $d$ is much smaller than $n$.

%%%%%%%%%% SS
\subsection{The special case $S = \GFq{q}$}

%%%%%%%%%%%%%%% SSS
\subsubsection{First simplifications}

We can write the cost of our modifications of Algorithm~1a as follows
$$T_G + T_{G \div X^k} + T_{I \div X^k} + T_{PEEA} + T_{v\mid G?},$$
where the notation $T_X$ should be selfexplanatory, the last one
accounting for testing whether $v\mid G$.
Since $G(X) = X^q-X$, we have $T_G = O(1)$ and $T_{G \div X^k} = O(1)$.

Since $S$ may be seen as an arithmetic progression, computing $I$ or
$T_{I \div X^k}$ costs $O(M(n))$ using the techniques of \cite{BoSc05}.
We still have $T_{PEEA} = O(M(d) \log d)$.

%%%%%%%%%%%%%%% SSS
\subsubsection{Discarding $v$}

Step 3 amounts to checking whether $v(X)$ factors into linear
factors. The ordinary algorithm requires division of $G(X)$ by
$v(X)$ and in case of success, finding the roots of $v(X)$.

When $q$ is very small, we can find the roots of $v(X)$ in $\GFq{q}$
via successive evaluation of $v(a)$ for $a \in \GFq{q}$ in $O(q)$
additions. This cost would therefore be neglectible. 

For larger $q$, we can use the Cantor-Zassenhaus or Berlekamp
algorithms, starting with the computation of
$X^q \bmod v$ at a cost of $O(M(d) \log q)$. In that case, we can
speed up the factoring process of $v(X)$ when needed (storing
$X^{(q-1)/2}$ for future use when $q$ is odd, etc.). The test $v\mid G$
will cost $O(M(d)\log q)$ for all relations, and in case of success,
will be followed by the total cost to find $(d-1)/2$ roots, that is to
say $O(d M(d) \log q)$ operations (assuming gcd to cost less than
exponentiations).

Also, some product tree of the $v$'s could be contemplated.

We can discard some polynomials $v(X)$ by using Swan's
theorem \cite{Swan62}, via computation of the discriminant of $v(X)$,
for a cost of $O(M(d^2))$ operations.

%%%%%%%%%%%%%%% SSS
\subsubsection{Final cost}

In summary, we find
$$T_G = O(1), \quad T_{G \div X^k} = O(1), \quad T_{I \div X^k} = O(M(q)),$$
$$T_{EEA} = O(M(d) \log d), \quad T_{X^q \bmod v} = O(M(d)\log q), 
\quad T_{roots} = O(d M(d)\log q).$$
%% Further speedups will be possible in the case where we need to perform
%% a lot of computations with respect to the same support.

%%%%% S
\section{Discrete logarithms}
\label{sct-dl}

\subsection{Connection with decoding Reed-Solomon codes}
\label{sct-decomp}

Consider $\GFpn{q}{h}$ realized as $\GFq{q}[X]/(Q(X))$, and let $S$ be
any subset of $\GFq{q^h}$, such that $Q(a)\neq 0$ for any $a\in S$,
and $n=\size S$. Let $S_\mu$ the set of subsets of size $\mu$ of $S$.
For $A \in S_\mu$, define
$$f_A(X) = \prod_{a \in A} (X - a).$$
We extend~\cite{ChWa07} in a more general context: the field is not
necessarily finite, and $Q(X)$ is not irreducible. Indeed,
\cite{ChWa07} considered only finite fields, and $S\subset \GFq q$.
\begin{theorem}\label{prop:bouboule}\label{prop:bouboulemore}
  Consider $F/K$ a field extension. Let be fixed a monic $Q(X)\in
  K[X]$, with $\deg Q(X)=h$, and $S\subset F$ have size $n$, such that
  $Q(a)\neq 0$ for all $a\in S$. Let $1\leq\mu\leq n$.  For
  any $f(X)\in K[X]$, $\deg f (X)<\mu$, there exists $A\in S_\mu$, such that
\begin{equation}\label{eq:factorbase}
\prod_{a\in A}(X-a)\equiv f(X) \bmod Q(X)
\end{equation}
if and only if the word
\[
y=\ev_S\left( -{f(X)}/{Q(X)}-X^k\right)
\]
is exactly at distance $n-\mu$ from the Reed-Solomon code $C_k$ of
dimension $k=\mu-h$ and support $S$. All the sets $A$ such
that~\refeq{eq:factorbase} holds can be found by decoding y up to the
radius $n-\mu$.
\end{theorem}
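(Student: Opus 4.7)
\emph{Proof plan.} The plan is to set up an explicit bijection between subsets $A\in S_\mu$ realizing~\refeq{eq:factorbase} and codewords of $C_k$ lying at distance exactly $n-\mu$ from $y$. The key move is the following normalization. If $\prod_{a\in A}(X-a)\equiv f(X)\bmod Q(X)$, then there exists $g\in K[X]$ with $\prod_{a\in A}(X-a) - f(X) = Q(X)g(X)$. The left-hand side is monic of degree $\mu$ and $\deg f<\mu$, so $g$ is forced to be monic of degree $\mu-h=k$. Writing $g(X) = X^k + r(X)$ with $\deg r<k$ produces a polynomial $r\in K[X]$ whose evaluation $\ev_S(r)$ is a codeword of $C_k$; this is the codeword one decodes to.

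For the forward implication, I would evaluate the identity $\prod_{a\in A}(X-a) = f(X) + Q(X)(X^k+r(X))$ at each $x_i\in S$. If $x_i\in A$ the left-hand side vanishes, which rearranges to $r(x_i) = -f(x_i)/Q(x_i) - x_i^k = y_i$. If $x_i\notin A$ the left-hand side is nonzero (distinct field elements) and $Q(x_i)\neq 0$ by hypothesis, so $r(x_i)\neq y_i$. Hence $\ev_S(r)$ agrees with $y$ at exactly the $\mu$ positions indexed by $A$, giving distance $n-\mu$.

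For the converse, I would reverse the construction. Given a codeword $\ev_S(r)\in C_k$ at distance $\leq n-\mu$ from $y$, let $A=\{x_i\in S:r(x_i)=y_i\}$ and form
\[
h(X) := f(X) + Q(X)\bigl(X^k+r(X)\bigr).
\]
Since $\deg f<\mu$ and $Q\cdot(X^k+r)$ is monic of degree $\mu$, $h$ is monic of degree $\mu$. For each $x_i\in A$ the definition of $y_i$ gives $h(x_i)=0$, so $h$ has at least $|A|$ distinct roots. If $|A|>\mu$, then $h$ must vanish, forcing $\deg f=\mu$, a contradiction; this simultaneously rules out distance strictly less than $n-\mu$. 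Hence $|A|=\mu$ exactly, $h(X)=\prod_{a\in A}(X-a)$, and~\refeq{eq:factorbase} follows. The same mapping shows that every valid $A$ corresponds to a codeword within the decoding radius, so decoding $y$ up to radius $n-\mu$ retrieves all such $A$.

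There is no real obstacle: once one spots the substitution $g = X^k + r$, which converts ``monic quotient of degree $k$'' into ``membership in $C_k$'', the whole argument reduces to a degree count and a pointwise evaluation. The only subtle point is upgrading ``at most $n-\mu$'' to ``exactly $n-\mu$'', and this is handled by the degree-versus-roots contradiction above.
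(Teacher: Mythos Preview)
Your argument is correct and follows the same route as the paper's own proof: normalize the quotient as $X^{k}+r(X)$, evaluate at the support points, and finish with a degree count. One minor slip: you place $g$ and $r$ in $K[X]$, but since $A\subset S\subset F$ these polynomials live a priori only in $F[X]$ (as the paper writes); this is harmless because $C_{k}$ is a code over $F$, and in fact your treatment of the ``exactly $n-\mu$'' clause via the degree-versus-roots contradiction is more explicit than the paper's.
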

\begin{proof}
  Let $f(X)\in K[X]$ be given, $\deg f(X)<\mu$, and suppose that there
  exists $ A\in S_\mu$, such that $ \prod_{a\in A}(x-a)\equiv
  f(x)\bmod Q(x) $.  Then there exists $ t(X)\in F [X]$, $ \deg
  t(X)=\mu-h=k$, such that $\prod_{a\in A} (x-a)= f(x)+t(x)Q(x)$.  We
  remark that $t(X)$ is monic, and we write $t(X)=X^k+r(X)$, with
  $\deg r(X)<k$. Then
\[
f(X)+(X^k+r(X))Q(X)=\prod_{a\in A}(X-a),
\]
which implies that $r(a)=- {f(a)}/{Q(a)}-a^k$ for $a\in A$. Since
$\size A=\mu$, the word $\ev_S\left( -{f(X)}/{Q(X)}-X^k\right)$ is at
distance $n-\mu$ from $\ev_S(r(X))\in C_k$.

Conversely, if $\ev_S\left( -{f(X)}/{Q(X)}-X^k\right)$ is at distance
exactly $n-\mu$ from $C_k$, there exists $A\in S_\mu$ and $r(X)$ with
$\deg r(X)<k$, such that $r(a)=- {f(a)}/{Q(a)}-a^k$ for $a\in A$. Then
\[
\prod_{a\in A}(X-a)\mid
f(X)+(X^k+r(X))Q(X),
\]
and the equality of the degrees  imply the equality,
\[
\prod_{a\in A}(X-a)=
f(X)+(X^k+r(X))Q(X)
\]
which is a relation of type~\refeq{eq:factorbase}.
\end{proof}
\begin{remarks}
When $\mu$ and $k$
  are such that $n-\mu$ is half the minimum distance of $C_k$, the
  mapping
\[
A\in S_\mu\mapsto \prod_{a\in A}(X-a)\bmod Q(X)
\]
is one-to-one, since we have unique decoding.  Furthermore, when
$S\subset \GFq q$, the number of relations of
type~\refeq{eq:factorbase} is $\binom n \mu$, and the probability of
finding one is thus $\binom n \mu/q^h$ when $f(X)\in\GFq q[X]$ is
picked at random of degree less than $h$ . When some elements of $S$
lie in some extension of $\GFq q$, the probability is more intricate
because of the action of the Galois group, see
Section~\ref{sct-galois}.
\end{remarks}
\subsection{The RSDL algorithm for computing discrete logarithms}

The basic idea is to decompose polynomials using decoding of Reed-Solomon
codes in the inner loop. For ease of presentation, we suppose that $F
= \GFpn{q}{h}$. In Section \ref{sct-galois}, we will present a more
general setting.

\medskip
\noindent
\bigskip
\noindent 
INPUT: 
a) $\GFpn{q}{h} = \GFq{q}[X]/(Q(X))$ where $Q(X)$ is primitive
of degree $h$ over $\GFq{q}$; $\GFpn{q}{h}^* = \langle\omega\rangle$.

b) Two parameters $n$ and $\mu$, describing a Reed-Solomon code $[n,
k=\mu-h, d=n-k+1]$; a subset $S$ of $\GFpn{q}{h}$ of cardinality $n$.

\smallskip
\noindent OUTPUT: the logarithm $\log_\omega(\omega-a)$ for all $a \in S$.

\medskip
\noindent
Step 1. (Randomize) Compute $f(X)=X^u\bmod Q(X)$ for a random $u$.

\medskip
\noindent
Step 2. (Decode) Find $A \in S_\mu$ such that 
\[f_{A}(X)\equiv
  f(X)\bmod Q(X)
  \] using decoding. If this fails then pick another random $u$.

\medskip
\noindent
Step 3. (Recover support) given the error-locator polynomial $v(X)$, compute
$
f_A(X)=G(X) / v(X) = \prod_{a\in A}(X-a)
$;
from which we get the relation
\[
u\equiv\sum_{a\in A}\log(\omega-a) \bmod (q^h-1).
\]
If we have less than $n$ relations, goto step 1.

\medskip
\noindent
Step 4. (Linear algebra) solve the $n\times n$ linear system over
  $\ZN{(q^h-1)}$, which yields the logarithms of $\log(\omega-a)$.

\medskip
From $f_A(X) = G(X) / v(X)$, we can rewrite a relation as
$$X^{u} v(X) \equiv G(X) \bmod Q(X).$$
The corresponding row of the relation matrix will have as many
non-zero coefficients as the degree of $v$, which will be shown to be small.

The search phase (finding individual logarithms) follows the same
scheme.

%%%%%%%%%% SS
\subsection{Numerical example}

Consider $\GFpn{13}{3} = \GFq{13}[X]/(X^3+2 X+11)$. We use $(n, k,
\mu) = (13, 7, 10)$, which gives $d = 7$. The support is $S = \{0, 1,
\ldots, 12\}$. The probability of
decomposition is $\approx 0.1302$. We find for instance that
$$X^{15} \equiv X^2 + 9 X + 1 \bmod (Q(X), 13).$$
We have to decode the word:
$$y = \ev_S(-X^{15}/Q(X)-X^7) = (7, 1, 1, 0, 1, 3, 6, 8, 9, 12, 4, 11, 10).$$
The PartialEEA procedure yields
$$u(X)=X^2 + 5 X + 3, \quad v(X)=5 X^3 + 2 X^2 + 3, \quad g(X)=7 X + 6,$$
And the polynomial $v$ factors as $(X-3)(X-8)(X-12)$, so that
$$X^{15} (X-3)(X-8)(X-12) \equiv G(X) \bmod (Q(X), 13).$$

Write $13^3-1 = 2^2 \cdot 3^2 \cdot 61$. Logarithms modulo $2^2$ and
$3^2$ are easy to compute. The matrix $M$ modulo $61$ is given in
\ref{fig1}. Its kernel is generated by 
$$V = \left(\begin{array}{cccccccccccccc}
1&3&52&24&57&9&41&54&42&27&41&35&5&36 \\
\end{array}\right)^t.
$$

Computing the logarithm of $X^2+1$ is done using the relation
$$(X^2+1) X \equiv G(X)/((X (X-2) (X-8))) \bmod Q(X)$$
and therefore
$$\log(X^2 + 1) = 417,$$
using the Chinese remaindering theorem.
(Note that this is a toy example, the logarithm of $X^2+1$ could have
been computed in different ways, factoring it over the factor base directly
for instance.)

\begin{figure}[hbt]
$$M = 
\left(\begin{array}{cccccccccccccc}
 15 & 0 & 0 & 1 & 0 & 0 & 0 & 0 & 1 & 0 & 0 & 0 & 1 & 1 \\
 19 & 0 & 1 & 0 & 0 & 0 & 1 & 0 & 0 & 0 & 0 & 1 & 0 & 1 \\
 33 & 1 & 0 & 0 & 1 & 0 & 0 & 1 & 0 & 0 & 0 & 0 & 0 & 1 \\
 40 & 0 & 0 & 1 & 0 & 0 & 0 & 0 & 1 & 0 & 1 & 0 & 0 & 1 \\
 48 & 0 & 0 & 0 & 1 & 0 & 0 & 0 & 1 & 0 & 0 & 0 & 0 & 1 \\
 51 & 1 & 0 & 0 & 0 & 0 & 0 & 0 & 0 & 1 & 0 & 0 & 1 & 1 \\
 0 & 0 & 0 & 0 & 0 & 0 & 0 & 1 & 0 & 1 & 0 & 0 & 1 & 1 \\
 8 & 0 & 0 & 0 & 0 & 1 & 0 & 0 & 1 & 1 & 0 & 0 & 0 & 1 \\
 15 & 1 & 0 & 0 & 0 & 0 & 0 & 0 & 0 & 1 & 1 & 0 & 0 & 1 \\
 25 & 0 & 0 & 0 & 0 & 0 & 0 & 1 & 0 & 1 & 1 & 0 & 0 & 1 \\
 31 & 0 & 0 & 1 & 1 & 0 & 0 & 0 & 0 & 0 & 0 & 1 & 0 & 1 \\
 36 & 0 & 0 & 0 & 0 & 1 & 1 & 0 & 0 & 0 & 0 & 0 & 0 & 1 \\
 48 & 1 & 0 & 0 & 0 & 0 & 0 & 1 & 1 & 0 & 0 & 0 & 0 & 1 \\
 14 & 0 & 1 & 1 & 1 & 0 & 0 & 0 & 0 & 0 & 0 & 0 & 0 & 1 \\
 16 & 0 & 0 & 1 & 0 & 0 & 0 & 0 & 0 & 0 & 1 & 0 & 1 & 1 \\
 17 & 1 & 0 & 0 & 1 & 1 & 0 & 0 & 0 & 0 & 0 & 0 & 0 & 1 \\
 22 & 0 & 0 & 0 & 1 & 0 & 1 & 0 & 0 & 1 & 0 & 0 & 0 & 1 \\
 24 & 0 & 0 & 0 & 1 & 0 & 0 & 0 & 0 & 0 & 0 & 0 & 1 & 1 \\
 27 & 0 & 0 & 0 & 1 & 1 & 0 & 1 & 0 & 0 & 0 & 0 & 0 & 1 \\
\end{array}\right)
$$
\caption{Matrix modulo 61 for the example. \label{fig1}}
\end{figure}

%% We find by enumeration the case $n=25$, $k=19$, $\mu=22 \geq
%% (n+k)/2$. We get also $d = 7$, $t = 3$.

%%%%%%%%%% SS
\subsection{Algorithmic remarks}

The inner loop of the algorithm is the computation of
$$y=\ev\left(-\frac{f(X)}{Q(X)}-X^k\right)\in\GFq{q}^n,$$
followed by the interpolation of $y$ on the support, to get $I(X)$. We
can greatly simplify the work by noting that
\begin{lemma}\label{fdallem}
Let $\tilde{Q}(X)$ the inverse of $-Q(X)$ modulo $G(X)$. Then
$$I(X) = (f(X) \tilde{Q}(X) \bmod G(X)) - X^k.$$
\end{lemma}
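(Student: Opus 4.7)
The plan is to use the defining property of $\tilde Q$ to transform the computation of $I(X)$ into a single modular reduction, and then to conclude by uniqueness of Lagrange interpolation.

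First I would introduce $J(X) := f(X)\tilde Q(X) \bmod G(X)$, which has degree less than $n$. Evaluating at any $x_i \in S$, the fact that $G(x_i)=0$ means the ``mod $G(X)$'' step does not change the value, so $J(x_i) = f(x_i)\tilde Q(x_i)$. On the other hand, reducing the congruence $\tilde Q(X)\cdot(-Q(X)) \equiv 1 \bmod G(X)$ at $x_i$ (again using $G(x_i)=0$) and using the hypothesis $Q(x_i)\neq 0$ from Section~\ref{sct-decomp}, we obtain $\tilde Q(x_i) = -1/Q(x_i)$. Combining these gives
\[
J(x_i) \;=\; -\frac{f(x_i)}{Q(x_i)}.
\]

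Subtracting $x_i^k$ on both sides, the polynomial $J(X)-X^k$ takes the value $-f(x_i)/Q(x_i) - x_i^k = y_i$ at each $x_i$. Since $\deg J < n$ and $k<n$, the polynomial $J(X)-X^k$ has degree strictly less than $n$; it therefore agrees with the (unique) interpolant $I(X)$ at all $n$ distinct nodes $x_1,\dots,x_n$. By uniqueness of Lagrange interpolation, $I(X) = J(X)-X^k$, which is the desired identity.

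The argument is essentially routine; the only subtlety to be careful about is the degree bookkeeping that justifies the appeal to uniqueness (namely that $J(X)-X^k$ still has degree $<n$), and the observation that $\tilde Q$ is well-defined precisely because $G$ and $Q$ share no common root, a consequence of the assumption $Q(a)\neq 0$ for all $a\in S$.
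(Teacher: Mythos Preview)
Your proof is correct. Note, however, that the paper does not actually supply a proof of this lemma: it is stated and then immediately used, with the justification left implicit. Your argument---evaluate the candidate $J(X)-X^k$ at each support point $x_i$, use $G(x_i)=0$ to pass through the ``$\bmod\ G$'' reductions, and conclude by uniqueness of the degree-$(<n)$ interpolant---is exactly the natural verification one would expect, and it fills in the missing details cleanly. The two side remarks you make (that $k<n$ so the degree bound is preserved, and that $\tilde Q$ exists because $\gcd(Q,G)=1$ follows from $Q(a)\neq 0$ for $a\in S$) are the right points to flag.
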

Since $\tilde{Q}(X)$ is computed only once, the cost of evaluating
$I(X)$ is just $O(M(n))$. From a practical point of view, this is
multiplication by a fixed polynomial modulo a fixed polynomial, a very
well known operation that is very common in computer algebra packages
(in particular NTL).

Moreover, this result shows that we do not need the explicit points
of the support, but rather their minimal polynomial(s). This will be
the key to the incremental version of Section \ref{sct-incr}.

%%%%% S
\section{Selecting optimal parameters}
\label{sct-analysis}

%%%%%%%%%% SS
\subsection{Unique decoding}

% We have used $S \subset \GFq{q}$, limiting $n \leq q$. It is possible
% to use a subset $S$ in a larger extension field of $\GFq{q}$, thus
% enabling $n > q$. This will be explained in part II of this work.

Given $q$ and $h$, we aim to build an optimal $[n,k,n-k+1]_q$
Reed-Solomon code for finding relations~\refeq{eq:factorbase}.  While
Theorem~\ref{prop:bouboule} was used in~\cite{ChWa07} in a negative
way for proving hardness of decoding up to a certain radius, we
consider it in a positive way for solving discrete logarithm problem
using unique decoding. We will consider list decoding in a subsequent
work.
\begin{proposition}
 In the context of Theorem~\ref{prop:bouboule}, to be able to use a unique
 decoding algorithm of the code $C_k$, the parameters should be chosen
 as follows: $\tau=h$, $\mu=n-h$, and $k=n-h$.
\end{proposition}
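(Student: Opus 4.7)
The plan is to combine the decoding-radius requirement supplied by Theorem \ref{prop:bouboule} with the classical bound for unique decoding of Reed-Solomon codes, then argue that the only degree of freedom left, namely $\mu$, is pinned down by an optimality consideration.

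First, I would recall that Theorem \ref{prop:bouboule} says a relation $\prod_{a\in A}(X-a)\equiv f(X)\bmod Q(X)$ with $|A|=\mu$ corresponds to a word lying at distance exactly $n-\mu$ from $C_k$, with $k=\mu-h$. Hence any algorithm that recovers such an $A$ must decode at radius at least $\tau = n-\mu$. Because Reed-Solomon codes are MDS, the minimum distance of $C_k$ is $d=n-k+1 = n-\mu+h+1$, and a unique-decoding algorithm can handle errors only up to $\lfloor (d-1)/2\rfloor = \lfloor (n-\mu+h)/2\rfloor$.

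Next I would combine these two to obtain the inequality $n-\mu \le \lfloor (n-\mu+h)/2\rfloor$, which (after doubling and dropping the floor, both directions being straightforward) is equivalent to $n-\mu\le h$, i.e.\ $\mu\ge n-h$. So unique decoding is possible precisely when $\mu$ lies in this range, and the corresponding decoding radius is $\tau=n-\mu\le h$.

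Finally, I would invoke the probability computation in the Remarks following Theorem \ref{prop:bouboule}: when $S\subset \GFq q$, a random $f$ of degree less than $h$ admits a decomposition of type \refeq{eq:factorbase} with probability $\binom{n}{\mu}/q^h$. Since in the regime of interest $h$ is much smaller than $n/2$, the binomial $\binom{n}{\mu}$ is strictly decreasing on $\mu\in[n-h,n]$, so the yield per random trial is maximised by choosing $\mu$ as small as allowed, namely $\mu=n-h$. Plugging this back gives $\tau=h$ and the stated code parameters.

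The only non-trivial step is the optimality argument in the last paragraph: the bound $\mu\ge n-h$ alone is forced by the unique-decoding constraint, but turning it into an equality requires explaining why we want $\mu$ small (more relations per successful decoding). Everything else is routine substitution into the Reed-Solomon minimum-distance formula.
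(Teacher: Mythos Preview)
Your argument is correct and follows the same line as the paper's proof, which is extremely terse: it simply writes the unique-decoding condition $\tau=\lfloor(n-k)/2\rfloor$, substitutes $k=\mu-h=n-\tau-h$, and reads off $\tau=h$. The only real difference is that the paper treats the unique-decoding radius as an \emph{equality} from the outset (implicitly taking $\tau$ maximal), whereas you first derive the inequality $\mu\ge n-h$ and then justify equality via the probability $\binom{n}{\mu}/q^h$; your version is more explicit about why one should saturate the bound, while the paper leaves that tacit.
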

\begin{proof}
  For  Reed-Solomon codes, unique decoding holds for
  $\tau=\floor{\frac{n-k}2}$. From
  $k=\mu-h=n-\tau-h$, it follows that $\tau=h$.
\end{proof}
It should be noted that $\mu$ and $\tau$ play a symmetrical role.

%%%%%%%%%% SS
\subsection{Analyses}

%%%%%%%%%% SS
\subsubsection{Set up}

For any integer $s > 0$, we assume that any elementary operation over
$\GFpn{q}{s}$ takes $O(M(\log q^s)) = O(M(s))$ operations over
$\GFq{q}$. In the same vein, an operation over $\ZN{(q^s-1)}$ takes
$M(s)$ operations over $\GFq{q}$. Given that $\tau = h$ and $d =
2h+1$, we will write our complexities in terms of $h$ (which is the
degree of the error-locator polynomial $v(X)$).

The typical analysis involves the probability $\Proba$ to get a
relation (here getting a decoded word). Since we need $n$ relations,
each relation is found after $1/\Proba$ attempts and $c$ operations,
leading to $O(n \frac{1}{\Proba} c)$. Using the decoding approach of
Section \ref{sct-ud}, we see that a more precise count is
$$T_G + T_{G \div X^k} + n \frac{1}{\Proba} (T_{I \div X^k} + T_{EEA}
+ T_{v \mid G?}) + n T_{roots},$$
where we account for reusing $G$ and $G\div X^k$ and perform root
searching of $v$ only in case of success.

The cost of solving a $n\times n$ linear system with $h$
non-zero coefficients per row is $O(h\cdot n^2)$ operations over
$\ZN{(q^h-1)}$, yielding $O(h\cdot n^2\cdot M(h))$ operations over
coefficients of size $\log q$.

We will be fixing $h$ and letting $q$ go to infinity.

%%%%%%%%%% SS
\subsubsection{The ordinary case}

In case $S$ is ordinary, that is $S\subset \GFpn{q}{h}$, all
polynomial operations are to be understood in $\GFpn{q}{h}$. We inject
the complexities of Section \ref{sct-ud}. We
have $T_G = O(M(n))$. The additional cost will be
$$O\left(\left(n \frac{1}{\Proba} \left(M(n) + M(h) \log h +
M(n)\right) + n h M(h) \log q\right) M(h)\right),$$
so that the total cost is
$$O\left(\left(n \frac{1}{\Proba} \left(M(n) + M(h) \log h
\right) + n h M(h) \log q + h \cdot n^2\right) M(h)\right).$$

%%%%%%%%%%%%%%% SSS
\subsubsection{The case $S \subset \GFq{q}$}

This implies that $n \leq q$. Moreover, With $\mathcal{Q} = q^h$, we get
$$\Proba = \frac{\binom{n}{\mu}}{\mathcal{Q}} =
\frac{\binom{n}{n-\tau}}{\mathcal{Q}} =
\frac{\binom{n}{\tau}}{\mathcal{Q}} = 
\frac{\binom{n}{h}}{\mathcal{Q}}
\approx \frac{n^h}{h! \cdot \mathcal{Q}},$$
since $h$ is fixed.

Using the fact that most of the operations
are performed in $\GFq{q}$, instead of $\GFpn{q}{h}$, we obtain
$$O\left(n \frac{1}{\Proba} \left(M(n) + M(h) \log h
\right) + n h M(h) \log q\right) + O(h \cdot n^2 M(h)).$$

If $n > \log q$ and $n > h$, this simplifies to
$$O\left(h! (q/n)^h n M(n)\right) + O(h \cdot n^2 M(h)),$$
and the first term always dominates. In order to have something not
too slow, we are driven to taking $n = q$, for a cost of 
$$O(h! \cdot q M(q)) + O(h M(h)\cdot q^2) = O(h! \cdot q M(q)) =
\tilde{O}(q^2).$$
Note that both costs are asymptotically $\tilde{O}(q^2)$, but
with different constants. We cannot balance these two phases easily,
since $h$ and $q$ are given. The only thing we can do is relax the
condition $n \leq q$ using Galois properties (see Section
\ref{sct-galois}).

We call RSDL-FQ the corresponding discrete logarithm algorithm with $S =
\GFq{q}$. One of the advantages of this algorithm is to operate on
$q\times q$ matrices with $2 q + h q$ non-zero coefficients, so that a
typical structured Gaussian elimination process will be very
efficient.
\begin{proposition}
For fixed $h$ and $q$ tending to infinity, the algorithm RSDL-FQ has
running time $O(h! \cdot q M(q))$ and requires storing $O(q)$
elements of size $h \log q$.
\end{proposition}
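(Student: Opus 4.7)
The plan is to specialize the general complexity analysis for $S \subset \GFq{q}$, already carried out just above, to the parameter choice $n = q$ that defines RSDL-FQ, and then verify that the collection phase dominates.

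First, I would substitute $n = q$ into $\Proba \approx n^h/(h!\,q^h)$ to obtain $1/\Proba = O(h!)$. Injecting this and $n = q$ into the cost bound
$$O\!\left(n\,\tfrac{1}{\Proba}\bigl(M(n) + M(h)\log h\bigr) + n\,h\,M(h)\log q\right) + O\bigl(h\,n^2\,M(h)\bigr),$$
and treating $h$ (hence $M(h)$ and $\log h$) as constants, the first parenthesis collapses to $O(h!\,q\,M(q))$. The term $n h M(h)\log q = O(q\log q)$ is absorbed since $q\,M(q) = \Omega(q^2)$, and the linear-algebra contribution $O(h\,n^2\,M(h)) = O(q^2)$ is likewise absorbed (being a constant multiple of $q^2$, while $h!\,q M(q)$ is already $\Omega(q^2)$ with a constant at least $h!$). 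Hence the total cost is $O(h!\,q\,M(q))$, as claimed.

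For storage, by Lemma \ref{fdallem} the algorithm needs only: the precomputed $\tilde Q(X)$ (of size $O(h)$ over $\GFq{q}$), the current word $y$ and error-locator $v(X)$ (of sizes $O(q)$ and $O(h)$ over $\GFq{q}$), and the relation matrix. The latter has $n = q$ rows each with at most $h+2$ non-zero coefficients in $\ZN{q^h-1}$. For fixed $h$ this totals $O(q)$ coefficients, each of bit-size $h\log q$; the ancillary storage is absorbed into this.

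No conceptual obstacle arises: the only care needed is to check, in the regime of fixed $h$ and $q \to \infty$, that $h!\,q\,M(q)$ really swallows both the $O(q \log q)$ root-search contribution and the $O(q^2)$ linear-algebra contribution under any standard model of $M$. This is routine since $h!$ is a constant in $q$ and $M(q) \geq q$.
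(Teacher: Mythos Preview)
Your argument is correct and mirrors the paper's own reasoning: you specialize the preceding cost formula to $n=q$, observe $1/\Proba=O(h!)$, and check that the linear-algebra and root-finding terms are absorbed by $h!\,qM(q)$ for fixed $h$. One small slip: $\tilde Q(X)$ is the inverse of $-Q(X)$ modulo $G(X)=X^q-X$, hence has degree up to $q-1$ (not $O(h)$); this does not affect your storage conclusion since it is still $O(q)$ coefficients over $\GFq{q}$.
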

As a corollary, we see that the interpolation step dominates.
This motivates the following Section, where this cost is decreased.

%%%%%%%%%%%%%%% SSS
\subsubsection{Looking for a subexponential behavior}

It is customary to search for areas in the plane $(\log q, h)$
yielding a subexponential behavior for the cost function. The analysis
of the previous section works also in case $h \ll n$. The cost being
$\tilde{O}(h!\cdot q^2)$, we look for $0 \leq \alpha < 1$ such that
$$2 \log q + h \log h \simeq c (\log \mathcal{Q})^{\alpha}
(\log\log\mathcal{Q})^{1-\alpha}.$$
Making the hypothesis that $h \ll \log q$ implies
$$2 \frac{\log \mathcal{Q}}{h} \simeq c (\log
\mathcal{Q})^{\alpha}(\log\log\mathcal{Q})^{1-\alpha},$$
or
$$h = \left(\frac{2 \log \mathcal{Q}}{c \log\log
\mathcal{Q}}\right)^{1-\alpha}.$$
In turn,
$$h \simeq \left(\frac{2 h \log q}{c \log\log
q}\right)^{1-\alpha}, \text{i.e.,} 
\; h \simeq \left(\frac{2 \log q}{c \log\log q}\right)^{1/\alpha-1}.$$
In order to respect the hypothesis $h \ll \log q$, we need $\alpha
\geq 1/2$, and $1/2$ is possible.

%%%%% S
\section{The incremental version of the algorithm}
\label{sct-incr}

The idea of this variant is to use $f(X) = X^u$ for increasing values
of $u$, so that we can compute the interpolating polynomial for $u+1$
from that of $u$, noting that $I(X)$ is the real input to
Algorithm 1a.
We first explain how to do this, and then conclude with the
incremental version of our algorithm. We cannot prove that using these
polynomials lead to the same theoretical analysis, but it seems to
work well in practice. Note that the search phase can
benefit from the same idea.

The following result will help us interpolating very rapidly, and
is a rewriting of Lemma \ref{fdallem}.
\begin{proposition}
For $u$ an integer, put $f_u(X) = X^u f_0(X) \equiv c_{h-1} X^{h-1} +
\cdots + c_0 \bmod Q(X)$ and $I_u$ the interpolation polynomial that
satisfies $I_u(x_i) = y_i$ for all $i$. Then
$$I_{u+1} \equiv X I(X) + X^{k+1} - X^k + c_{h-1} \bmod G(X).$$
\end{proposition}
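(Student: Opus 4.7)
My plan is to read the proposition as an immediate consequence of Lemma~\ref{fdallem} combined with the fact that multiplying by $X$ modulo a monic polynomial $Q(X)$ amounts to a single polynomial subtraction. Concretely, writing $\tilde Q(X)$ for the inverse of $-Q(X)$ modulo $G(X)$, Lemma~\ref{fdallem} tells us that
\[
I_u(X) \equiv f_u(X)\,\tilde Q(X) - X^k \pmod{G(X)},
\]
and similarly for $I_{u+1}$ with $f_{u+1}$. So the task reduces to expressing $f_{u+1}$ in terms of $f_u$.

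First I would observe that $f_{u+1}(X)$ is the reduction of $X\cdot f_u(X)$ modulo $Q(X)$. Since $f_u(X) = c_{h-1}X^{h-1}+\cdots+c_0$ has degree at most $h-1$, the polynomial $X f_u(X)$ has degree at most $h$, with leading term $c_{h-1}X^h$. Because $Q(X)$ is monic of degree $h$, a single subtraction of $c_{h-1}Q(X)$ suffices to bring the degree below $h$, hence one has the identity (in $\GFq{q}[X]$, not merely modulo $Q$)
\[
f_{u+1}(X) = X\,f_u(X) - c_{h-1}\,Q(X).
\]

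Next I would substitute this identity into the expression for $I_{u+1}$ given by Lemma~\ref{fdallem}:
\[
I_{u+1}(X) \equiv X\,f_u(X)\,\tilde Q(X) - c_{h-1}\,Q(X)\,\tilde Q(X) - X^k \pmod{G(X)}.
\]
By the definition of $\tilde Q$, one has $Q(X)\,\tilde Q(X) \equiv -1 \pmod{G(X)}$, so the middle term simplifies to $+c_{h-1}$. For the first term, I would reuse Lemma~\ref{fdallem} for $u$ in the form $f_u(X)\,\tilde Q(X) \equiv I_u(X) + X^k \pmod{G(X)}$ and multiply by $X$, giving $X\,I_u(X) + X^{k+1}$. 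Collecting everything yields the claimed formula.

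There is no serious obstacle; the only point requiring a bit of care is the first step, where one must use that $Q$ is monic to know that the reduction $X f_u \bmod Q$ subtracts $c_{h-1}Q$ exactly once. Everything else is purely formal manipulation modulo $G(X)$.
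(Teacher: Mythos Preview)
Your proof is correct and is exactly the ``rewriting of Lemma~\ref{fdallem}'' that the paper alludes to without spelling out: the paper gives no explicit proof, only the remark that the proposition follows from that lemma, and your computation (using $f_{u+1}=Xf_u-c_{h-1}Q$ together with $Q\tilde Q\equiv -1\bmod G$) is precisely how one unpacks that remark.
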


For the convenience of the reader, we give a description of the
incremental operations performed in the relation collection phase. We
claim that we no longer need $y_i$, past the initial evaluation.

\medskip
\noindent
{\bf procedure} StartDecodingAt($f_0$, $(x_i)$)

\medskip
\noindent
0. Precompute $G(X) = \prod_{i=1}^n (X-x_i)$; $\tilde{Q}(X) \equiv
-1/Q(X) \bmod G(X)$; $f = f_0$;

\noindent
1. [first interpolation for $u = 0$:] $I:=\tilde{Q} f\bmod G(X) - X^k$;

\noindent
2. for $u:=1$ to $q^h-2$ do

\ \ \ \ \ $c = $ coefficient of degree $h-1$ of $f$;

\ \ \ \ \ \{ update $I$ \}

\ \ \ \ \ $I = (X I + X^{k+1} - X^k + c) \bmod G$;

\ \ \ \ \ \{ update $f$ to $X^{u+1} \bmod Q(X)$ \}

\ \ \ \	\ $f = X f \bmod Q(X)$;

\ \ \ \ \ if $y$ can be decoded with error-locator polynomial $v(X)$ then

\ \ \ \ \ compute $v(X) = \prod_{i=1}^h (X-e_i)$, set $A = S -
\{e_i\}$, 

\ \ \ \ \ store $(u, \{e_i\})$ corresponding to the relation 

$$X^u \equiv f_A(X) \bmod Q(X) \quad \text{ or } \quad
X^{u} v(X) \equiv G(X) \bmod Q(X).$$

\bigskip Note that the storage is minimal, we need to store $u$
and $h$ elements of $\GFq{q}$ for each relation. The corresponding row
in the matrix modulo $P \mid q^h-1$ will contain one integer modulo
$P$ with $h$ values equal to $1$.

The analysis of this very heuristic version is similar to that of the
original version: we replace some $O(M(n))$ by $O(n)$ in the updating
step for $I$. We find the same cost. From
a practical point of view, we gain a lot, since all operations are now
linear in $n = q$. It is all the more efficient as $G(X) = X^q-X$ and
reduction modulo $G$ costs $O(1)$ operations.

%%%%% S
\section{Galois action}
\label{sct-galois}

This section is devoted to the case $S\not\subset \GFq q$, with the
idea of increasing the probability of finding relations by using {\em
  helper fields}. It turns out that $S$ and the relations must be
Galois stable. This is not exactly the same effect as obtained in the
NFS/FFS case (see for instance \cite{JoLe06}), but it results in
smaller matrices.

%%%%%%%%%% SS
\subsection{Galois orbits}
We state the property in full generality, for a general field
$K$.
\begin{theorem}\label{th:galois}
  Let $F/K$ be a Galois extension, and $Q(X)\in K[X]$ have
  degree $h$. Let  $\mu>h$ be an
  integer. Let $f(X)\in K[X]$, $\deg f(X)<\mu$, such that there exists
  a unique $A\in S_\mu$, such that
\[
f(X)\equiv \prod_{a\in A}(X-a)\bmod Q(X).
\]
Then $A$ is  stable under $\Gal(F/K)$.
\end{theorem}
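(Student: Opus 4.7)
The plan is to exploit the Galois action directly on the congruence $f(X)\equiv \prod_{a\in A}(X-a) \bmod Q(X)$. Since $f(X)$ and $Q(X)$ have coefficients in $K$, they are fixed pointwise by every $\sigma\in\Gal(F/K)$, while $\prod_{a\in A}(X-a)$ is a polynomial in $F[X]$ on which $\sigma$ acts by sending it to $\prod_{a\in A}(X-\sigma(a)) = \prod_{b\in \sigma(A)}(X-b)$. Applying $\sigma$ to the congruence thus yields
\[
f(X)\equiv \prod_{b\in \sigma(A)}(X-b) \bmod Q(X).
\]
If we know that $\sigma(A)\in S_\mu$, then the uniqueness hypothesis forces $\sigma(A)=A$, and since this holds for every $\sigma$, the set $A$ is Galois stable.

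The main step I would carry out carefully is the justification that $\sigma$ commutes with reduction modulo $Q(X)$ at the level of polynomial congruences over $F$: since the coefficients of $Q$ lie in $K$, the ideal $(Q(X))$ of $F[X]$ is $\sigma$-stable, so congruence modulo $Q$ is preserved under the coefficient-wise action of $\sigma$.

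The one subtle point, which I would flag explicitly (and which appears to be an implicit standing assumption in the statement), is that the uniqueness argument requires $\sigma(A)\in S_\mu$, i.e.\ $\sigma(A)\subset S$; this is automatic provided $S$ itself is Galois stable, which is precisely the setting the authors care about (e.g.\ $S=\GFq q$ inside $\GFpn q h$, or more generally a union of Galois orbits). Assuming this, the conclusion follows in one line from uniqueness. If instead $S$ were not assumed Galois stable, the theorem would need to be reformulated (either by enlarging $S$ to its Galois closure, or by concluding only that the multiset of residues $\prod_{a\in\sigma(A)}(X-a)\bmod Q(X)$ coincide, which is strictly weaker).

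So the overall structure of the proof I would write is: (i) fix $\sigma\in\Gal(F/K)$; (ii) apply $\sigma$ coefficient-wise to both sides of the congruence and use $K$-rationality of $f$ and $Q$; (iii) identify the transformed product as $\prod_{b\in\sigma(A)}(X-b)$; (iv) invoke the Galois stability of $S$ to place $\sigma(A)$ in $S_\mu$; (v) conclude $\sigma(A)=A$ by uniqueness. There is no real obstacle, only the bookkeeping of what is fixed by $\sigma$ and the implicit hypothesis on $S$.
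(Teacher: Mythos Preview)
Your proof is correct and follows essentially the same route as the paper: apply $\sigma\in\Gal(F/K)$ coefficient-wise to the identity $\prod_{a\in A}(X-a)=f(X)+t(X)Q(X)$, use that $f$ and $Q$ are fixed to obtain $\prod_{a\in A}(X-\sigma(a))\equiv f(X)\bmod Q(X)$, and invoke uniqueness. Your explicit flagging of the implicit hypothesis that $S$ be Galois stable (so that $\sigma(A)\in S_\mu$) is well taken---the paper's proof glosses over this and only remarks just after the theorem that $S$ must be a union of Galois orbits.
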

\begin{proof}
  We have $ \prod_{a\in A}(X-a)= f(X)+t(X) Q(X)$, for some $t(X)\in
  F[X]$. Then, for any $\sigma\in\Gal(F/K) $, we find:
\[
  \sigma\left(\prod_{a\in A}(X-a)\right)= f(X)+\sigma(t(X))
  Q(X),
\]
where the action of $\sigma$ is naturally extended to polynomials.
Writing  $\sigma(t(X))=u(X)$ for some $u(X)\in F[X]$, and since
$\sigma (f(X))=f(X)$, we get
\[
  \prod_{a\in A}(X-\sigma (a))= f(X)+u(X)  Q(X),
\]
i.e.
\[
\prod_{a\in A}(X-\sigma (a))\equiv f(X)\bmod Q(X).
\]
From the hypothesis of the unicity of $A$, we have $\sigma(A)=A$.
\end{proof}
%%%%% S
To use the decoding correspondence, we fix a set $S\subset F$ such
that relations of type~\refeq{eq:factorbase} are sought for sets
$A\subset S$. Then, we can enforce the uniqueness condition by fixing
the parameters $n=\size S$, and $\mu$ to have ``unique decoding'',
i.e.\ $\mu=n-h$.  From the previous Theorem, $S$ must be a union of
orbits under $\Gal(F/K)$. We collect these orbits by their size, i.e.\
\[
S=\bigcup_{i=1}^{e} S_i
\]
where $S_i$ is the union of the orbits of size $i$ contained in $S$,
and $e$ is the maximal orbit size.  Defining $n_i=\size{S_i}$, then $
n= \sum_{i=1}^{e} i n_i$, and $(n_1,\dots,n_{e})$ is a \emph{partition
  of $n$ with restricted summands}. Given $e$ and $n$, we call the set
of such partition set $P_n^{e}$ for short, and its size is
asymptotically\cite{Flajolet-Sedgewick:2009}
\[
\size{P_n^e}\sim\frac 1{e!(e-1)!} n^{e-1}.
\]
Before going further, let us mention that $F/K$ does not need to be a
subfield of $K[X]/Q(X)$, and the following diagrams are perfectly
valid for Theorem~\ref{th:galois} to hold and for all the considerations
in this Section.
\begin{center}
\begin{picture}(110,60)(0,0)
\put (-10,50) {$K[X]/Q(X)$}
\put (55,20) {$S\subset F$}
\put (15,5) {\line(5 ,1 ){50}}
\put (5,10){\line(0 ,1 ){35}}
\put (0,0) {$K$}
\end{picture}
\begin{picture}(110,60)(0,0)
\put (0,50) {$\GFpn {q}{5}$}
\put (55,20) {$S\subset \GFpn q 2$}
\put (15,5) {\line(5 ,1 ){50}}
\put (5,10){\line(0 ,1 ){35}}
\put (0,0) {$\GFq q$}
\end{picture}
\end{center}
\begin{proposition}
  Let $S=\cup_{i=1}^{e} S_i$, with $n_i=\size {S_i}$,  $ n=
  \sum_{i=1}^{e} i n_i$, and suppose that unique decoding holds for
  the parameters $n$ and $\mu$. Then the number of
  relations~\refeq{eq:factorbase} is
\[
N_e(\mu)=\;\sum_{(\mu_1,\dots,\mu_{e})\in P_\mu ^{e}}\
\prod_{i=1}^{e}\binom {n_i}{\mu_i}.
\]
\end{proposition}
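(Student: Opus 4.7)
The plan is to invoke Theorem~\ref{th:galois} and then perform an orbit-by-orbit combinatorial count. Since unique decoding holds by hypothesis, each relation of type~\refeq{eq:factorbase} corresponds to exactly one $A\in S_\mu$, and by Theorem~\ref{th:galois} this $A$ must be stable under $\Gal(F/K)$. Hence $A$ is a disjoint union of complete Galois orbits contained in $S$, and counting relations reduces to counting the Galois-stable subsets of $S$ of cardinality $\mu$.

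Next I would decompose any such $A$ according to orbit size. For each $1\leq i\leq e$, set $A_i=A\cap S_i$. By definition $S_i$ is a disjoint union of $n_i$ orbits, each of size $i$, and since $A_i$ is itself Galois-stable it is obtained by selecting some subcollection of these orbits. If $\mu_i$ orbits of size $i$ are chosen, then $A_i$ has cardinality $i\mu_i$ and can be picked in exactly $\binom{n_i}{\mu_i}$ ways. The selections for distinct values of $i$ are independent, so for a fixed tuple $(\mu_1,\dots,\mu_e)$ the number of admissible $A$ is $\prod_{i=1}^{e}\binom{n_i}{\mu_i}$.

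Finally I would translate the size constraint $\size A=\mu$ into $\sum_{i=1}^{e} i\mu_i=\mu$, i.e.\ $(\mu_1,\dots,\mu_e)\in P_\mu^{e}$, and sum the per-tuple count over all such partitions to obtain the claimed formula for $N_e(\mu)$.

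The only real subtlety is notational: one must be careful that $n_i$ counts the number of orbits of size $i$ (so that $\size{S_i}=i n_i$ and $n=\sum i n_i$) rather than $\size{S_i}$ itself, and that $P_\mu^{e}$ is interpreted as the set of tuples $(\mu_1,\dots,\mu_e)$ with $\mu_i\geq 0$ and $\sum i\mu_i=\mu$. Once these conventions are pinned down, the proof is a direct combination of Theorem~\ref{th:galois} with an elementary orbit-counting argument, and requires no further ingredient.
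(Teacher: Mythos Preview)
Your argument is correct and follows essentially the same route as the paper: invoke Theorem~\ref{th:galois} to reduce to counting Galois-stable subsets $A\subset S$ of size $\mu$, split $A$ according to orbit size, choose $\mu_i$ orbits of size $i$ among the $n_i$ available, and sum over $(\mu_1,\dots,\mu_e)\in P_\mu^e$. Your remark about the notational tension is on point: the paper's own proof treats $n_i$ as the \emph{number} of orbits of size $i$ (so that $|S_i|=i\,n_i$ and $n=\sum_i i\,n_i$), exactly as you surmised.
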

\begin{proof}
  Consider a partition $(\mu_1,\dots\mu_{e})$ of $\mu$,
  $\mu=\mu_1+2\mu_2+\dots+e\mu_{e}$, and for each $i$, pick
  $\mu_i$ orbits of size $i$ in $S$, and consider their union
  $O_i$. Then $ \prod_{i=1}^{e}\prod_{a\in O_i}(X-a) $ is a
  decomposition of type ~\refeq{eq:factorbase} of size $\mu$, which is
  Galois stable. Conversely, given a relation $ \prod_{a\in
    A}(X-a)\bmod Q(X)$, with $\size A=\mu$, Theorem~\ref{th:galois}
  indicates that $A$ is Galois stable. For each $i$, letting $O_i$ be
  the set of elements of $A$ with orbit size equal to $i$, and
  $\mu_i=\size{O_i}$, we can write
\[
A=O_1\cup\dots\cup O_{e},
\]
with $\mu=\mu_1+2\mu_2+\dots+e\mu_{e}$, i.e.\ a partition of
$\mu$.  The enumeration formula follows, by considering that there are
$\binom {n_i}{\mu_i}$ ways of choosing $\mu_i$ orbits between $n_i$.
\end{proof}
Then, given $\GFq {q^h}$, in the above situation, the
probability of finding a relation is
\[
\varpi=\frac{N_e(h)}{q^h} 
=\frac{1}{q^h} \left( \sum_{(h_1,\dots,h_{e})\in P_h^{e}}\
\prod_{i=1}^{e}\binom {n_i}{h_i}\right),
\]
from the symmetry of $\mu$ and $\tau=n-\mu$, and using  $\tau=h$.

%%%%%%%%%% SS
\subsubsection{Example: $n=q^e$}

We choose $S = \GFpn{q}{e}$, $S_i$ being the
set of all elements in $S$ whose orbits under Galois have size
$i$. Then $n_i
=\frac{1}{i}\sum_{j|i} \mu(j)q^{\frac{i}{j}}
\sim q^i/i$, if $i\mid e$, and zero otherwise.  For $h$
constant and growing $q$, we get a probability of
\begin{align*}
\varpi&=\frac{1}{q^h} \left( \sum_{(h_1,\dots,h_{e})\in P_h ^{e}}\
\prod_{i=1}^{e}\binom {n_i}{h_i}\right)\\
&\sim\frac{1}{q^h} \left( \sum_{(h_1,\dots,h_{e})\in P_h ^{e}}\
\prod_{i=1}^{e}\frac{n_i^{h_i}}{h_i!}\right)\\
&\sim\frac{1}{q^h} \left( \sum_{(h_1,\dots,h_{e})\in P_h ^{e}}\
\prod_{i=1}^{e}\frac{q^{ih_i}}{i^{h_i}h_i!}\right)\\
&=\frac{1}{q^h} \left( \sum_{(h_1,\dots,h_{e})\in P_h ^{e}}\
q^{h_1+2h_2+\dots+e h_e}\prod_{i=1}^{e}\frac{1}{i^{h_i}h_i!}\right)\\
&=\sum_{(h_1,\dots,h_{e})\in P_h ^{e}}\prod_{i=1}^{e}\frac{1}{i^{h_i}h_i!}=c_e(h)
\end{align*}
which does not depend on $q$.  This is much higher than $1/h!$, see
Table~\ref{table:c_e}.
\begin{center}
\begin{figure}
\begin{tabular}{llllllll}
$h$&3&5&7&11&13&31&67\\
\hline
$1/h!$& $0.167$& $0.00833$& $0.000198$& $2.51\ 10^{-8}$& $1.61\ 10^{-10}$& $1.22\ 10^{-34}$& $2.74\ 10^{-95}$\\
\hline
$c_{2}(h)$& $ 0.667$& $ 0.217$& $ 0.0460$& $ 0.000895$& $ 9.13\ 10^{-5}$& $ 4.46\ 10^{-16}$& $ 2.36\ 10^{-45}$\\
\hline
$c_{3}(h)$& & $ 0.175$& $ 0.0697$& $ 0.00356$& $ 0.000783$& $ 1.13\ 10^{-11}$& $ 1.32\ 10^{-31}$\\
\hline
$c_{4}(h)$& & $ 0.467$& $ 0.213$& $ 0.0333$& $ 0.0113$& $ 3.24\ 10^{-8}$& $ 1.03\ 10^{-22}$\\
\hline
$c_{6}(h)$& & & $ 0.407$& $ 0.117$& $ 0.0605$& $ 1.48\ 10^{-5}$& $ 4.11\ 10^{-15}$\\
\hline
$c_{8}(h)$& & & & $ 0.117$& $ 0.0696$& $ 9.79\ 10^{-5}$& $ 5.71\ 10^{-12}$\\
\hline
$c_{9}(h)$& & & & $ 0.0591$& $ 0.0424$& $ 9.06\ 10^{-5}$& $ 1.76\ 10^{-11}$\\
\hline
$c_{12}(h)$& & & & & $ 0.227$& $ 0.00384$& $ 6.67\ 10^{-8}$\\
\hline
\end{tabular}
\caption{The constants $1/h!$, $c_e(h)$, for $e = 2, 3, 4, 6, 8, 9, 12$, and
  $h = 3, 5, 7, 11, 13, 31, 67$.}
\label{table:c_e}
\end{figure}
\end{center}

\subsection{Practice}

Since $S = \GFpn{q}{e}$, we have $G(X) = X^{q^e}-X$. Decoding over $S$
amounts to testing divisibility of $G(X)$ by an error-location
polynomial $v(X)$ whose roots are conjugate under the Frobenius, since
$S$ and the corresponding $A$ are. This means that $v(X)$ is a product
of minimal polynomials of elements of $S$. In other words, we can see
this as decomposing over the basis containing these minimal
polynomials. As a consequence, the matrix of relations will be
smaller, its number of columns being $\sum_i n_i \simeq q^e/e$ instead
of $q^e$.

It is not difficult to adapt the incremental version of our algorithm
to that case.

Assuming all operations take place over
$\GFq q$, we thus have a complexity for the relation step
which is dominated by
$$  C=O\left(n \frac{1}{\varpi} \left(M(n) + M(h) \log
      q\right)\right).
$$
In the case where we take $n = q^e$, this yields
$$C=\tilde{O}\left(\frac{q^{2e}}{c_e(h)}\right).$$
Optimizing the value of $n$ is still on-going work.

%%%%%%%%%% SS
\subsection{Numerical example}

Consider $\GFpn{7}{5} = \GFq{7}[X]/(X^5+X+4)$ and a helper field
$\GFpn{7}{2}$. The decomposition base contains 7 polynomials of degree
1 and 21 of degree 2, and its cardinality is $28$. By Table
\ref{table:c_e}, the probability of success is approximately
$0.217$. We find for instance
$$X^{20} (X+3)(X+4)(X+5)(X^2+X+4) \equiv G(X):=X^{49}-X \bmod Q(X).$$

%%%%% S
\section{Numerical examples}

%%%%%%%%%% SS
\subsection{RSDL-FQ}

We programmed RSDL-FQ in NTL 5.5.2 and made it run on an Intel Xeon
CPU E5520 at 2.27GHz. We took $p = 65537$ and ran the program on
several prime values of $h$ (timings are in seconds rounded to the
nearest integer):
$$\begin{array}{|r||r|r|r|r|r|r|}\hline
 h & \text{update} & \text{EEA} & X^q\bmod v &
 \text{roots} & \log_2 P & \text{linear algebra} \\ \hline
 3 &     67 &    4 &    4 & 3 & 27 &    213 \\
 5 &   1297 &  135 &  104 & 6 & 28 &   3398 \\
 7 &  53007 & 8086 & 5745 & 8 & 97 & 124095\\
% 11 &    &       &      &      &   & 161 & \\
% 13 &    &       &      &      &   & 186 & \\
\hline
\end{array}$$
Defining polynomials are:
$$W^3 + 6 W - 3, \quad W^5 + W + 3, \quad W^7 + W + 3.$$
For the last column, we indicate the size of the largest prime factor $P$
of $p^h-1$ and the time needed to perform Gaussian inversion on the
system modulo $P$ (using \verb+Magma V2.17-1+ on the same machine).

%%%%%%%%%% SS
\subsection{RSDL-HF}
We programmed the collection phase RSDL-HF in NTL 5.5.2 and made it
run on an Intel Xeon CPU E5520 at 2.27GHz, collecting the $v(X)$
unfactored. 

We took $p = 3$ and ran the program on $h=29$, with a helper field of
degree $e=8$ (timings are in seconds rounded to the nearest integer),
and the defining polynomial is $Q:=W^{29} + 2W^4 + 1$. Another example
is $p=101$, $h=11$ and $e=2$.  We also include an example over $\GFq
2$, and extension degree $h=31$, with $e=8$.
$$\begin{array}{|r|r|r||r|r|r|r|r|r|}\hline
p & h &e & \text{update} & \text{EEA} & X^{q^e}\bmod v &  \text{linear algebra} \\ \hline
% \text{roots} & \log_2 P &  \text{linear algebra} \\ \hline
2&31 & 8 & 9 & 271 & 347 & 0 \\
3&29 &  8 & 2255    &  12456    & 8036  & 2 \\
%101&7&3& 304845  & 843 & 617&?\\
101&11&2 & 440 &816 & 589& 100\\
\hline
\end{array}$$

%%%%%%%%%% SS

%%%%% S
\section{Concluding remarks}

Improvements can certainly be made to the present scheme to tackle
more realistic discrete logarithm computations. It seems valuable to
have an approach not using smooth polynomials nor using too much
algebraic factorizations in discrete logarithm computations. This
sheds some light on the relationship between coding theory and
classical problems in algorithmic number theory.

Our investigations on the use of Reed-Solomon decoding for discrete
logarithm computations have just begun. For the time being,
the proposed approach seems to have a worse complexity than its
competitor FFS. Many paths are still to follow. In our setting, the use
of so-called large primes is not clear. In our
case, we can force them by trying to decode $P(X) X^u\bmod Q(X)$ for
fixed $P$ and hoping for several relations, but this does not seem to
decrease the cost of the algorithm.

Some other topics of research include the use of list decoding
algorithms, variants of Reed-Solomon or more general codes. We could
also dream of getting the best of the two worlds, for instance
factoring our $f_A(X)$'s to get more relations. All this is the
subject of on-going work.

%% {\bf Tori: having the choice of $S$ might be used in tori attacks????}
\medskip
\noindent {\bf Acknowledgments.} Our thanks go to A.~Bostan,
\'E.~Schost for answering our questions on computer algebra;
M.~Finiasz for helpful discussion, B.~Smith for his careful reading of
the manuscript.

\newcommand{\etalchar}[1]{$^{#1}$}

%%\bibliographystyle{alpha}
%%\bibliography{chwa}

\end{document}